\newcommand{\pathtotrunk}{./}
\title[An obstruction to subfactor principal graphs]{An obstruction to subfactor principal graphs from the graph planar algebra embedding theorem}
\author{Scott~Morrison}
\address{Mathematical Sciences Institute, the Australian National University
}%
\email{scott.morrison@anu.edu.au}
\address{%
\rm URL: \tt \url{http://tqft/net/}}
\begin{document}

\begin{abstract}
We find a new obstruction to the principal graphs of subfactors. It shows that in a certain family of 3-supertransitive principal graphs, there must be a cycle by depth 6, with one exception, the principal graph of the Haagerup subfactor. 
\end{abstract}

\maketitle

A $II_1$ subfactor is an inclusion $A \subset B$ of infinite von Neumann algebras with trivial centre and a compatible trace with $\operatorname{tr}(1) = 1$. In this setting, one can analyze the bimodules $\otimes-$generated by ${}_A B_B$ and ${}_B B_A$. The principal graph of a subfactor has as vertices the simple bimodules appearing, and an edge between vertices $X$ and $Y$ for each copy of $Y$ appearing inside $X \otimes B$. It turns out that this principal graph is a very useful invariant of the subfactor (at least in the amenable case), and many useful combinatorial constraints on graphs arising in this way have been discovered. As examples, see \cite{1007.1730, math/1007.1158, 1207.5090, 1307.5890}. Moreover, with sufficiently powerful combinatorial (and number theoretic, c.f. \cite{MR2307421, MR2472028, 1004.0665, 0810.3242}) constraints in hand, it has proved possible to enumerate all possible principal graphs for subfactors with small index. This approach was pioneered by Haagerup in \cite{MR1317352}, and more recently continued, resulting in a classification of subfactors up to index 5 \cite{1007.1730,index5-part2,index5-part3, index5-part4, 1304.6141}. 

In this note we demonstrate the following theorem, providing a combinatorial constraint on the principal graph of a subfactor, of a rather different nature than previous results.
\begin{mainthm}
\label{main-theorem}
If the principal graph of a 3-supertransitive $II_1$ subfactor begins as 
\begin{equation*}
\Gamma = 
\begin{tikzpicture}[baseline=-2pt, vertex/.style=fill,circle,inner sep=2,emptyvertex/.style=draw,circle,inner sep=2]
\node[vertex, label=above:$0$] (0) at (0,0) {}; 
\node[vertex, label=above:$1$] (1) at (1,0) {}; 
\node[vertex, label=above:$2$] (2) at (2,0) {}; 
\node[vertex, label=above:$3$] (3) at (3,0) {}; 
\node[vertex, label=above:$P$] (P) at (4,1) {}; 
\node[vertex, label=above:$Q$] (Q) at (4,-1) {}; 
\node[emptyvertex, label=above:$P'$] (P') at (5,1) {}; 
\node[emptyvertex, label=above:$Q'$] (Q') at (5,-1) {}; 
\draw (0) -- (3) -- (P) -- (P') (3) -- (Q) -- (Q');
\end{tikzpicture}
,
\end{equation*}
and there is no vertex at depth $6$ which is connected to both $P'$ and $Q'$, then the subfactor must have the same standard invariant as the Haagerup subfactor.
\end{mainthm}

(A subfactor being 3-supertransitive merely means that the principal graph begins with a chain of 3 edges. The hypothesis specifying the graph up to depth 5 is equivalent to the subfactor being 3-supertransitive and having `annular multiplicities' 10, as described in \cite{math/1007.1158}.)

This result uses the graph planar algebra embedding theorem, proved in \cite{MR2812459} and alternatively in the forthcoming \cite{tvc}. (The first result only holds in finite depth, while the second only assumes that the principal graph is locally finite; the result here inherits these restrictions. The principal graph of a finite index subfactor is always locally finite.)

This appears to be an instance of a potentially new class of obstructions to principal graphs of subfactors, somewhat different in nature from those derived by analyzing connections or quadratic tangles. It seems likely that many generalizations of this result are possible, especially if a more conceptual proof can be found.

As an example application, we can use this obstruction to rule out the existence of a subfactor with principal graph
$$\left(
\bigraph{bwd1v1v1v1p1v1x0p0x1v1x0p1x0p0x1p0x1v1x0x0x0p0x1x0x0p0x0x0x1v1x0x0p1x0x0p0x1x0p0x1x1p0x0x1v0x0x1x0x0v1duals1v1v1x2v4x2x3x1v2x1x3x4x5v1},
\bigraph{bwd1v1v1v1p1v0x1p0x1v1x0p0x1v0x1p1x0p1x0v0x1x0p0x1x0p1x0x1v0x1x0v1duals1v1v1x2v1x2v1x2x3v1}
\right)$$ at index $3+\sqrt{5}$. The possibility of such a subfactor arose during a combinatorial search for possible principal graphs, following on from \cite{1007.1730,index5-part2,index5-part3, index5-part4}. There is a bi-unitary connection on this principal graph, and this result immediately shows that the connection must not be flat.

Given a subfactor with principal graph beginning as $\Gamma$ in the theorem, in the corresponding planar algebra $P$ we have $P_{4,+} = TL_4 \oplus \bbC S$, where $S$ is a lowest weight vector (that is, in the kernel of each of the cap maps to $P_{3,\pm}$) and also a rotational eigenvalue with eigenvalue $\omega$ a fourth root of unity. In fact, $\omega = \pm 1$, as otherwise $P$ and $Q$ are dual to each other; in this case, the dual graph must begin the same way, and either Ocneanu's or Jones' triple point obstruction \cite{MR1317352,math/1007.1158,index5-part2} rules out a possible subfactor. The results of \cite{math/1007.1158} show that this element $S$, suitably normalized, satisfies the quadratic identity
\begin{equation}
\label{eq:S2}
S^2 = (1-r) S + r f^{(4)}
\end{equation}
where $r$ is the ratio $\dim P / \dim Q$ (or its reciprocal, if less than one) and $f^{(4)}$ is the 4-strand Jones-Wenzl idempotent. See \cite[Theorem 3.9]{0909.4099} for details. The main identity of \cite{math/1007.1158} shows that
\begin{equation*}
r = \begin{cases}
\frac{[5]+1}{[5]-1} & \text{when $\omega = +1$ and} \\
1 & \text{when $\omega = -1$.}
\end{cases}
\end{equation*}
Here $[5]$ denotes the quantum integer $q^{-4} + q^{-2} + 1 + q^2 + q^4$, where $q$ is a parameter determined by the (unknown) index of the subfactor $N \subset M$ by $[M: N] = [2]^2 = q^{-2} + 2 + q^2$.

The embedding theorems of \cite{MR2812459,tvc} show that there is a faithful map of planar algebras  $\varepsilon: P \to GPA(\Gamma)$. (See \cite{MR1865703, 0909.4099, 1208.3637, 1205.2742} for more details on the definition of the graph planar algebra, and examples of calculations therein.)
We thus consider the image $\varepsilon(S) \in GPA(\Gamma)_{4,+}$ in the graph planar algebra for $\Gamma$. The algebra $GPA(\Gamma)_{4,+}$ splits up as a direct sum of matrix algebras, corresponding to loops on $\Gamma$ that have their base-point and mid-point at specified vertices at even depths:
$$ GPA(\Gamma)_{4,+} \iso \bigoplus_{a,b} \cM_{a,b}.$$

For some, but not all, of these matrix algebras, every component corresponds to a loop which stays within the first five depths of the principal graph (that is, does not go above $P'$ and $Q'$ in the diagram above). In particular, these matrix algebras are all those $\cM_{a,b}$ where $a, b \in \{0,2,P,Q\}$, \emph{except} for $\cM_{P,P}$ and $\cM_{Q,Q}$. This condition only holds for $\cM_{P,Q}$ and $\cM_{Q,P}$ because of our additional hypothesis that $P'$ and $Q'$ are not both connected to some vertex at depth 6. We call the subalgebra of $GPA(\Gamma)_{4,+}$ comprising these matrix algebras $\cA$.

The condition that an element $S \in GPA(\Gamma)_{4,+}$ is a lowest weight rotational eigenvector with eigenvalue $\omega$ consists of a collection of linear equations $\cL_\omega$ relating the coefficients of various loops on $\Gamma$. (Notice that the coefficients in these equations depend on $q$, because the lowest weight condition depends on the dimensions of the objects in the principal graph, and these are determined by $q$ and $r$.) Some of these equations only involve loops supported in the first 5 depths of $\Gamma$, and we call these equations $\cL^+_\omega$. (Thinking of these equations as functionals on $GPA(\Gamma)_{4,+}$, we are taking the subset of functionals which are supported on the subalgebra $\cA$.) The solutions of $\cL_\omega$ are certainly a subspace of the solutions of $\cL^+_\omega$.

The general strategy is now straightforward.
\begin{itemize}
\item Identify the Jones-Wenzl idempotent $f^{(4)}$ in the matrix algebras $\cA$, as a function of the parameter $q$.
\item Solve the equations $\cL^+_\omega$, which must hold for any lowest weight rotational eigenvector with eigenvalue $\omega$, in the matrix algebras $\cA$, finding a linear subspace.
\item Analyze the equation $S^2 = (1-r) S + r f^{(4)}$ in this subspace.
\end{itemize}
This provides us with some quadratic equations in a small vector space over $\mathbb{Q}(q)$. In fact, by carefully choosing particular equations to consider, and by appropriate changes of variables, we can understand the solutions to these equations directly. We find, in \S \ref{sec:omega=+1}, that when $\omega = +1$, no solutions are possible. On the other hand we see in \S \ref{sec:omega=-1} that when $\omega = -1$ there is a discrete set of solutions if and only if a certain identity is satisfied by the parameter $q$:
$$q^8 - q^6 - q^4 - q^2 + 1 = 0.$$
The real solutions of this equation ensure that $q^2 + 2 + q^{-2} = (5+\sqrt{13})/2$, i.e. the index of our subfactor is exactly the index of the Haagerup subfactor. It is easy to see that the only principal graph extending the one we have specified up to depth 5 that has this index is the principal graph of the Haagerup subfactor. Moreover, previous classification results (c.f. \cite{MR1317352, MR1686551} and also \cite{MR2679382} for an alternative construction) show that there is a unique (up to complex conjugation) subfactor planar algebra at this index, besides the $A_\infty$ subfactor planar algebra.

The remainder of this paper consists of an analysis of the equations discussed above. Unfortunately, in the present state of development of computer algebra, solving them remains an art, not a science. Gr\"{o}bner basis algorithms, unsupervised, can not draw the necessary conclusions. Essentially what follows is an explanation of how to decide which equations to consider in which order, so that the solutions are at each step easy to extract. The derivation is, unfortunately, somewhat opaque, representing the human-readable form of an argument better carried out in conversation with a computer.

\section{Preliminary calculations}

We begin by noting the dimensions of all the vertices above, as functions of $q$ and $r$.
\begin{lem}
\begin{align*}
\dim(0) & = 1 \displaybreak[1] \\
\dim(1) & = q+q^{-1}  \displaybreak[1] \\
\dim(2) & = q^2 + 1 + q^{-2}  \displaybreak[1] \\
\dim(3) & = q^3 + q + q^{-1} + q^{-3}  \displaybreak[1] \\
\dim(P) & = \frac{r}{r+1}(q^4 + q^2 + 1 + q^{-2} + q^{-4})  \displaybreak[1] \\
\dim(Q) & = \frac{1}{r+1}(q^4 + q^2 + 1 + q^{-2} + q^{-4})  \displaybreak[1] \\
\dim(P') & = \frac{r}{r+1}(q^5 + 2 q^3+ 2q + 2q^{-1} + 2q^{-3} + q^{-5}) - (q^3 + q + q^{-1} + q^{-3}) \\
	      & = \frac{r}{r+1}(q^5 + q^{-5}) + \frac{r-1}{r+1}(q^3 + q + q^{-1} + q^{-3}) \\
\dim(Q') & = \frac{1}{r+1}(q^5 + 2 q^3+ 2q + 2q^{-1} + 2q^{-3} + q^{-5}) - (q^3 + q + q^{-1} + q^{-3})  \displaybreak[1] \\
	      & = \frac{1}{r+1}(q^5 + q^{-5}) - \frac{r-1}{r+1}(q^3 + q + q^{-1} + q^{-3})
\end{align*}
\end{lem}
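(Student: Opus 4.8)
The plan is to deduce every line of the lemma from the single structural fact that on the principal graph of a subfactor the vector of dimensions is the Frobenius--Perron eigenvector of the adjacency matrix of $\Gamma$, with eigenvalue equal to the graph norm $\sqrt{[M:N]} = [2] = q+q^{-1}$, normalised so that $\dim(0) = 1$. Equivalently, for every vertex $v$ of $\Gamma$ one has $\sum_{w\sim v}\dim(w) = (q+q^{-1})\dim(v)$, the sum running over the neighbours $w$ of $v$. The standing hypothesis pins $\Gamma$ down completely through depth $5$, so the neighbour sets of $0,1,2,3,P,Q$ are all known --- in particular $P$ and $Q$ have no neighbour at depth $5$ besides $P'$ and $Q'$ respectively --- and this is exactly what makes the recursion usable. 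Throughout I would write $[n] = q^{n-1} + q^{n-3} + \dots + q^{-(n-1)}$ for the quantum integers and use the identity $[2][n] = [n+1] + [n-1]$.

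First I would run the eigenvector relation along the initial chain $0-1-2-3$: from $\dim(0)=1$ the relation at $0$ gives $\dim(1) = [2]$, the relation at $1$ gives $\dim(2) = [2]^2-1 = [3]$, and the relation at $2$ gives $\dim(3) = [2][3]-[2] = [4]$, which are the first four lines. Next, the relation at vertex $3$, whose neighbours are $2$, $P$ and $Q$, reads $\dim(2) + \dim(P) + \dim(Q) = [2]\dim(3)$, that is $\dim(P) + \dim(Q) = [2][4] - [3] = [5]$. Combining this with the definition of $r$ as the ratio $\dim(P)/\dim(Q)$ (or its reciprocal) gives $\dim(P) = \frac{r}{r+1}[5]$ and $\dim(Q) = \frac{1}{r+1}[5]$.

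Finally, the relation at vertex $P$, whose only neighbours are $3$ and $P'$, gives $\dim(P') = [2]\dim(P) - \dim(3) = \frac{r}{r+1}[2][5] - [4]$, and likewise $\dim(Q') = \frac{1}{r+1}[2][5] - [4]$. Expanding $[2][5] = q^5 + 2q^3 + 2q + 2q^{-1} + 2q^{-3} + q^{-5}$ gives the first displayed form for each of $\dim(P')$ and $\dim(Q')$; the second form then follows by writing $[2][5] = [6] + [4]$ and noting $[6]-[4] = q^5+q^{-5}$, which rewrites $\dim(P')$ as $\frac{r}{r+1}(q^5+q^{-5}) + \frac{r-1}{r+1}(q^3+q+q^{-1}+q^{-3})$ and $\dim(Q')$ as $\frac{1}{r+1}(q^5+q^{-5}) - \frac{r-1}{r+1}(q^3+q+q^{-1}+q^{-3})$. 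The whole computation is routine Laurent-polynomial arithmetic, so I do not anticipate a genuine obstacle; the one point that deserves care is the justification that $\Gamma$ really is complete through depth $5$ (equivalently, the annular multiplicity $10$ condition of \cite{math/1007.1158}), since it is this that licenses every neighbour list used above.
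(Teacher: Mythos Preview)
Your proposal is correct and follows exactly the approach the paper uses: the paper's proof is the single sentence ``This is just the condition that the dimensions form an eigenvector of the graph adjacency matrix, with eigenvalue $q+q^{-1}$, and the dimensions of the two vertices at depth 4 have ratio $r$,'' and you have simply written out this eigenvector recursion in detail. There is nothing to add.
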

\begin{proof}
This is just the condition that the dimensions form an eigenvector of the graph adjacency matrix, with eigenvalue $q+q^{-1}$, and the dimensions of the two vertices at depth 4 have ratio $r$.
\end{proof}

\begin{lem}
When $\omega = -1$, $r=1$, and 
\begin{align*}
\dim(P) = \dim(Q) & = \frac{1}{2}(q^4 + q^2 + 1 + q^{-2} + q^{-4}) \\
\dim(P')=\dim(Q') &= \frac{1}{2}(q^5 + q^{-5}).
\end{align*}
When $\omega = +1$, $$r= \frac{q^8 + q^6 + 2 q^4 + q^2 + 1}{q^8 + q^6 + q^2 + 1},$$
and
\begin{align*}
\dim(P) & = \frac{1}{2}(q^4+q^2+2+q^{-2}+q^{-4}) \\
\dim(Q) & = \frac{1}{2}(q^4+q^2+q^{-2}+q^{-4}) \\
\dim(P') & = \frac{1}{2}(q^5+q+q^{-1}+q^{-5}) \\
\dim(Q') & = \frac{1}{2}(q^5-q-q^{-1}+q^{-5})
\end{align*}
\end{lem}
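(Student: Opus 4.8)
The plan is simply to substitute the known values of the ratio $r = \dim(P)/\dim(Q)$ into the four formulas of the preceding lemma. As recalled above, the main identity of \cite{math/1007.1158} gives $r = 1$ when $\omega = -1$ and $r = \frac{[5]+1}{[5]-1}$ when $\omega = +1$, where $[5] = q^{-4} + q^{-2} + 1 + q^2 + q^4$. Everything then comes down to simplifying the coefficients $\frac{r}{r+1}$, $\frac{1}{r+1}$, and $\frac{r-1}{r+1}$ that appear in those formulas.

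When $\omega = -1$, $r = 1$ forces $\frac{r}{r+1} = \frac{1}{r+1} = \frac12$ and $\frac{r-1}{r+1} = 0$; substituting, the $\frac{r-1}{r+1}$-terms vanish and the four dimension formulas collapse to the claimed symmetric expressions, in particular $\dim(P') = \dim(Q') = \frac12(q^5 + q^{-5})$.

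When $\omega = +1$, I would first rewrite $r$ as a ratio of Laurent polynomials: multiplying numerator and denominator by $q^4$ and using $q^4[5] = q^8+q^6+q^4+q^2+1$ gives $r = \frac{q^8+q^6+2q^4+q^2+1}{q^8+q^6+q^2+1}$, which is the expression in the statement. From $r+1 = \frac{2[5]}{[5]-1}$ one reads off $\frac{r}{r+1} = \frac{[5]+1}{2[5]}$, $\frac{1}{r+1} = \frac{[5]-1}{2[5]}$, and $\frac{r-1}{r+1} = \frac{1}{[5]}$. Then $\dim(P) = \frac{[5]+1}{2[5]}\cdot[5] = \frac{[5]+1}{2}$ and $\dim(Q) = \frac{[5]-1}{2}$, which are the asserted depth-$4$ dimensions. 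For $\dim(P')$ and $\dim(Q')$ I would clear the denominator $2[5]$ and verify the one-line identity $(q^5+q^{-5}) + 2(q^3+q+q^{-1}+q^{-3}) = [5](q+q^{-1})$; this immediately reduces the two expressions to $\frac12(q^5+q+q^{-1}+q^{-5})$ and $\frac12(q^5-q-q^{-1}+q^{-5})$ respectively.

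The whole argument is mechanical once the value of $r$ is imported from \cite{math/1007.1158}, so there is no genuine obstacle here; the only things to be careful about are the bookkeeping in the Laurent-polynomial simplification and the sign in the $\dim(Q')$ formula.
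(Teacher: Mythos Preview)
Your proposal is correct and follows exactly the paper's approach: the paper's proof consists of the single sentence ``The formulas for $r$ follow immediately from Theorem 5.1.11 of \cite{math/1007.1158},'' leaving the substitution into the preceding lemma implicit, while you have spelled out that substitution in detail. Your computation of the auxiliary coefficients $\frac{r}{r+1}$, $\frac{1}{r+1}$, $\frac{r-1}{r+1}$ and the identity $(q^5+q^{-5}) + 2(q^3+q+q^{-1}+q^{-3}) = [5](q+q^{-1})$ is a clean way to organise the simplification.
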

\begin{proof}
The formulas for $r$ follows immediately from Theorem 5.1.11 from \cite{math/1007.1158}.
\end{proof}

\begin{lem}
Suppose $S$ is a  4-box. We will denote by $S(\cdots abc \cdots)$ the evaluation of $S$ on  certain loop; in each equation, the parts of the loops indicated by the ellipses are held fixed. If the loop passes through $b$ at either the base point or the mid point of the loop, we define $\kappa = 1$, and otherwise $\kappa = 1/2$.

If $S$ is a lowest weight 4-box,
\begin{align}
S(\cdots 010 \cdots) & = 0 \label{eq:lws-0} \\
S(\cdots 121 \cdots) & = - \left(\frac{1}{\dim(2)}\right)^\kappa S(\cdots 101 \cdots) \label{eq:lws-1} \\
S(\cdots 232 \cdots) & = -\left(\frac{\dim(1)}{\dim(3)}\right)^\kappa S(\cdots 212 \cdots) \label{eq:lws-2}  \\
S(\cdots P3P\cdots) & =  -\left(\frac{\dim(3)}{\dim(P')}\right)^\kappa S(\cdots P P' P \cdots) \label{eq:lws-P}, \\
S(\cdots Q3Q\cdots) & =  -\left(\frac{\dim(3)}{\dim(Q')}\right)^\kappa S(\cdots Q Q' Q \cdots) \label{eq:lws-Q}, \\
\intertext{and further} 
S(\cdots 3Q3 \cdots) & = -\left(\frac{\dim(Q)}{\dim(2)}\right)^\kappa S(\cdots 323 \cdots) -\left(\frac{1}{r}\right)^\kappa S(\cdots 3P3 \cdots) \label{eq:lws-3}
\end{align}
\end{lem}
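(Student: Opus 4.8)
The plan is to recognise all six identities as the coordinate form, in the graph planar algebra $GPA(\Gamma)$, of the single hypothesis that the lowest weight vector $S$ is annihilated by every cap map to $P_{3,\pm}$. Recall (see \cite{MR1865703, 0909.4099}) that such a cap sends $S$ to the element whose value on a loop $\gamma$ is a weighted sum $\sum_{x\sim a}\bigl(\tfrac{\dim x}{\dim a}\bigr)^{\pm\kappa}\,S(\cdots a\,x\,a\cdots)$: the sum runs over the neighbours $x$ in $\Gamma$ of a fixed vertex $a$ occurring as a label of $\gamma$, the bump $x$ being inserted at that arc so that a neighbouring pair of $S$'s arcs reads $a$, then $x$, then $a$ again; $\kappa=1$ exactly when the inserted bump abuts the base-point or mid-point arc of the loop and $\kappa=1/2$ otherwise; and the sign in the exponent depends only on the position of the capped arc relative to those two distinguished arcs. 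That $S$ is lowest weight says precisely that all such sums vanish, and since the cap only alters a loop locally one may hold the rest of the loop fixed, as in the statement.

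Granting this, each identity is read off from the small neighbourhood in $\Gamma$ of the vertex involved --- which is where we use the hypothesis that $\Gamma$ is exactly the drawn graph through depth~$5$. For \eqref{eq:lws-0}: the depth-$0$ vertex has $1$ as its only neighbour, so the cap relation at an arc flanked by two copies of $0$ has a single term, forcing $S(\cdots 010\cdots)=0$. The identities \eqref{eq:lws-1}, \eqref{eq:lws-2}, \eqref{eq:lws-P}, \eqref{eq:lws-Q} are the cap relations at $1$, $2$, $P$, $Q$ respectively; each of these vertices has exactly two neighbours in $\Gamma$, namely $\{0,2\}$, $\{1,3\}$, $\{3,P'\}$, $\{3,Q'\}$, so the relation has two summands, and solving for the first gives the stated identity (with $\dim(0)=1$ substituted in \eqref{eq:lws-1}).

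The remaining identity \eqref{eq:lws-3} is the cap relation at the vertex $3$, the one vertex of degree three here: capping an arc flanked by two copies of $3$ produces a three-term relation indexed by the neighbours $2$, $P$, $Q$ of $3$. Dividing through by the coefficient of the $S(\cdots 3Q3\cdots)$ term and rewriting the dimension ratios with $\dim(P)/\dim(Q)=r$ from the first preliminary lemma gives \eqref{eq:lws-3}.

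The step I expect to be the main obstacle is the bookkeeping of the dimension weights in the cap formula: one must keep careful track of where each capped arc lies relative to the base-point and mid-point of the loop, because the normalisation of the graph planar algebra at those two arcs both promotes $\kappa$ from $\tfrac12$ to $1$ and can invert the dimension ratio. It is exactly this that fixes the orientation of $\bigl(\tfrac{\dim x}{\dim a}\bigr)^{\kappa}$ in \eqref{eq:lws-1}--\eqref{eq:lws-Q} and of the factors $\bigl(\tfrac{\dim Q}{\dim 2}\bigr)^{\kappa}$, $\bigl(\tfrac1r\bigr)^{\kappa}$ in \eqref{eq:lws-3}; with it settled, each identity is a one-line substitution.
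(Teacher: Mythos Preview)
Your proposal is correct and follows essentially the same approach as the paper: both derive each identity by writing the vanishing of a cap map $\cap_i S$ in the graph planar algebra as $0=\sum_{\beta}\bigl(\tfrac{\dim\beta}{\dim\alpha}\bigr)^{\kappa} S(\cdots\alpha\beta\alpha\cdots)$, specialising the central vertex $\alpha$ to $0,1,2,P,Q,3$ in turn and solving for one summand. Your write-up is more explicit about the $\kappa$ bookkeeping and the possible inversion of the dimension ratio at the distinguished arcs, but the underlying argument is identical to the paper's.
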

\begin{proof}
These follow directly from the definition of lowest weight vector. Calculate the evaluations $\cap_i S(\cdots 1 \cdots)$, $\cap_i S(\cdots 2 \cdots)$, $\cap_i S(\cdots 3 \cdots)$ or $\cap_i S(\cdots P \cdots)$ as follows:
$$0 = \cap_i S(\cdots \alpha \cdots) = \sum_{\beta} \left(\frac{\dim(\beta)}{\dim(\alpha)}\right)^\kappa S(\cdots \alpha \beta \alpha \cdots).$$
Thus for example we have $0= {\dim(2)}^\kappa S(\cdots 121 \cdots) + \dim(0)^\kappa S(\cdots 101 \cdots)$.
\end{proof}

It is is easy to see that using Equations \eqref{eq:lws-1}, \eqref{eq:lws-2}, \eqref{eq:lws-P} and \eqref{eq:lws-Q} (and not needing Equation \eqref{eq:lws-3}) we can write the coefficient in a lowest weight 4-box of any loop supported in depths at most $5$ as a real multiple of the  coefficient of a corresponding `collapsed' loop which does not leave the immediate vicinity of the vertex $3$ (that is, supported on $2$, $3$, $P$ and $Q$). There are 81 such collapsed loops, and 24 orbits of the rotation group on the collapsed loops. Thus after fixing a rotational eigenvalue of $\omega=\pm 1$, we may work in a 24-dimensional space (different for each eigenvalue). Equation \eqref{eq:lws-0} ensures that any loop confined to the initial arm, and in particular the collapsed loop supported on $2$ and $3$, is zero. (An similar analysis of a lowest weight space in a graph planar algebra is described in more detail in \cite[\S A]{1208.3637}.) There remains all the instances of Equation \eqref{eq:lws-3}, which in fact cut down the space to either 4-dimensions when $\omega=-1$ or to 3 dimensions when $\omega=+1$. We prefer not to pick bases for these solution spaces at this point, however.

We next need certain coefficients of the 4-strand Jones-Wenzl idempotent.

\begin{lem}
\label{lem:jw}
\begin{align}
(f^{(4)})_{0123P,0123P} & = 1 \label{eq:f0123P} \\
(f^{(4)})_{0123Q,0123Q} & = 1 \label{eq:f0123Q} \\
(f^{(4)})_{2323P,2323P} & = \frac{q^{2} - 1 + q^{-2}}{q^{2} +2 + q^{-2}}  \label{eq:f2323P} \\
(f^{(4)})_{2323Q,2323Q} & =  \frac{q^{2} - 1 + q^{-2}}{q^{2} +2 + q^{-2}}  \label{eq:f2323Q} \\
(f^{(4)})_{2323P,23P3P} & = \frac {1} {\sqrt {\dim(2)\dim(P)}}\frac {1} {1+r}\frac {(q^2 + q^6) - r (1 + q^4 + q^8)} {(1 + q^4)^2}  \label{eq:f2323P,23P3P} \\
(f^{(4)})_{23232,23P32} & = -\sqrt{\frac {\dim (P)} {\dim (2)}}\frac {q^8} {(1 + q^4)^2 (1 + q^2 + 
      q^4)^2} \label{eq:f23232,23P32} \\
(f^{(4)})_{P323Q,P323Q} & = \frac{1-q^2+q^4-q^6+q^8}{\left(1+q^4\right)^2} \label{eq:fP323Q} 
\end{align}
\end{lem}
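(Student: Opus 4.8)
The plan is to compute the stated matrix coefficients of the 4-strand Jones--Wenzl idempotent $f^{(4)}$ directly from the Wenzl recursion
\[
f^{(n)} = f^{(n-1)} - \frac{[n-1]}{[n]}\, f^{(n-1)} e_{n-1} f^{(n-1)},
\]
interpreted inside the graph planar algebra $GPA(\Gamma)$. Each entry $(f^{(4)})_{w,w'}$ is the coefficient of the pair of length-$4$ paths $(w,w')$ on $\Gamma$, both running from a depth-$0$ vertex out to a depth-$4$ vertex (here $w,w'\in\{0123P,\,2323P,\,23P3P,\dots\}$ with the two endpoints agreeing). The key input is that in a graph planar algebra the Temperley--Lieb generators $e_i$ act with known coefficients built out of the vertex dimensions: the cup-cap $e_i$ pairs a path that backtracks at step $i$ (i.e.\ $\cdots \alpha\beta\alpha\cdots$) with another such backtracking path $\cdots\alpha\gamma\alpha\cdots$ with coefficient $\sqrt{\dim(\beta)\dim(\gamma)}/\dim(\alpha)$. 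So the whole computation reduces to bookkeeping over short paths near the vertices $2,3,P,Q$, using the dimension formulas from the first Lemma (with $\dim(0)=1$, $\dim(1)=[2]$, $\dim(2)=[3]$, $\dim(3)=[4]$, and $\dim(P),\dim(P')$ as given in terms of $q$ and $r$).

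The concrete steps I would carry out, in order: \textbf{(1)} Record that $f^{(4)}$ is supported on pairs of non-backtracking paths of length $4$, so the ``straight'' coefficients $(f^{(4)})_{0123P,0123P}$ and $(f^{(4)})_{0123Q,0123Q}$ equal $1$ immediately --- a path along the initial arm never backtracks, so every $e_i$ term in the recursion annihilates it and $f^{(4)}$ acts as the identity on it; this gives \eqref{eq:f0123P} and \eqref{eq:f0123Q}. \textbf{(2)} For the remaining entries, unwind the recursion $f^{(4)} = f^{(3)} - \tfrac{[3]}{[4]} f^{(3)} e_3 f^{(3)}$ and, where needed, $f^{(3)} = f^{(2)} - \tfrac{[2]}{[3]} f^{(2)} e_2 f^{(2)}$, $f^{(2)} = 1 - \tfrac{1}{[2]} e_1$. \textbf{(3)} Evaluate each piece on the relevant path pairs by summing over the intermediate paths, using the explicit $e_i$ coefficients above and substituting the dimension formulas; for the diagonal entries \eqref{eq:f2323P}, \eqref{eq:f2323Q} the path $2323P$ backtracks only at step $1$ among the ``inner'' positions, so only the $f^{(2)}$-level correction contributes and one gets $(q^2-1+q^{-2})/(q^2+2+q^{-2}) = ([3]-1)/[3]\cdot$(correction), which I would simplify to the stated rational function. \textbf{(4)} For the off-diagonal entries \eqref{eq:f2323P,23P3P}, \eqref{eq:f23232,23P32} the two paths differ by a backtrack at the last step (at $P$ versus returning to $2$, resp.\ at $P$ versus $2$ at position $4$), so these come entirely from the $-\tfrac{[3]}{[4]} f^{(3)} e_3 f^{(3)}$ term; expanding $e_3$ between the two paths produces exactly the $1/\sqrt{\dim(2)\dim(P)}$ and $\sqrt{\dim(P)/\dim(2)}$ prefactors, and the remaining scalar is a TL coefficient in $q$ that I would reduce to the quoted form using $\dim(P)+\dim(Q) = [5]$ and the depth-$4$ dimension formulas. \textbf{(5)} Finally \eqref{eq:fP323Q} is a diagonal entry on the path $P323Q$ (from $P$ at depth $4$, down to $3,2,3$, back out to $Q$ at depth $4$), which backtracks at the middle position $2$; again only an inner correction term contributes and the result simplifies to $(1-q^2+q^4-q^6+q^8)/(1+q^4)^2$.

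The main obstacle is not conceptual but the algebra in steps (3)--(4): the off-diagonal coefficients \eqref{eq:f2323P,23P3P} and \eqref{eq:f23232,23P32} require carefully tracking the square-root normalization factors $\sqrt{\dim(\cdot)}$ attached to each end of a path in the graph planar algebra convention, and then collapsing a sum over intermediate depth-$3$ and depth-$5$ paths (which involves $\dim(P')$, hence both $q$ and $r$) into a clean rational function purely in $q$ --- the cancellation of $r$ and of $\dim(P')$ is where an error is most likely to creep in. I would double-check these two entries by an independent route, e.g.\ verifying $f^{(4)} e_3 = 0$ and $(f^{(4)})^2 = f^{(4)}$ on the relevant $2\times 2$ blocks, or by comparing with the known planar-algebra-level formula for $f^{(4)}$ (Wenzl/Frenkel--Khovanov coefficients) pulled back under the embedding $\varepsilon$. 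Everything else is a mechanical substitution of the two dimension Lemmas into the Wenzl recursion.
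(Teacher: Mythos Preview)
Your overall plan --- compute the entries of $f^{(4)}$ in the graph planar algebra using the Temperley--Lieb structure and the vertex dimensions --- is exactly the paper's. The difference is that the paper does not unwind the Wenzl recursion. Instead it uses the known closed-form expansion of $f^{(4)}$ as a linear combination of the fourteen TL diagrams (the Frenkel--Khovanov/Morrison coefficients), and for a given pair of boundary paths $(w,w')$ simply picks out those diagrams whose strands never connect unequal boundary labels. For $(f^{(4)})_{2323P,2323P}$, for instance, only five diagrams survive, each contributes a monomial in the dimensions, and the sum collapses to $(q^2-1+q^{-2})/(q^2+2+q^{-2})$. This is much less bookkeeping than tracking the recursion level by level.

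Your recursive route would work, but several of your shortcut claims are wrong and would lead you astray. In step~(3), the path $2323P$ backtracks at \emph{both} positions $1$ and $2$ ($a_0=a_2=2$ and $a_1=a_3=3$), not just one; moreover, even though it does not backtrack at position $3$, the term $f^{(3)}e_3 f^{(3)}$ still contributes, because $f^{(3)}$ has off-diagonal entries taking $2323$ to $23P3$, and $23P3P$ \emph{does} backtrack at position $3$. In step~(4), the pairs $2323P/23P3P$ and $23232/23P32$ differ at the \emph{middle} position $2$, not at the last step; in particular $(f^{(3)}\otimes 1)_{2323P,23P3P}=(f^{(3)})_{2323,23P3}$ is nonzero, so these off-diagonal entries do not come solely from the $e_3$ term. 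The same caution applies to step~(5). None of this is fatal --- carrying the recursion out correctly gives the right answers --- but the argument as written skips contributions that are actually present, so the stated simplifications are not valid. If you want to proceed recursively you must sum over all intermediate paths at each level; the cleaner alternative, and the one the paper takes, is to write down $f^{(4)}$ diagram by diagram and evaluate each diagram on the given boundary labels.
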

\begin{proof}
\newcommand{\input{diagrams/tikz/TL.tex}}[1]{\input{diagrams/tikz/TL#1.tex}}
We'll just illustrate the method of calculation for $(f^{(4)})_{2323P,2323P}$.
We only need to calculate the contribution to $f^{(4)}$ of the Temperley-Lieb diagrams
$$\begin{tikzpicture}[baseline=-2,scale=0.25]
        \draw (1,-1.5) -- (1,1.5);
        \draw (2,-1.5) -- (2,1.5);
        \draw (3,-1.5) -- (3,1.5);
        \draw (4,-1.5) -- (4,1.5);
\end{tikzpicture}\;, \begin{tikzpicture}[baseline=-2,scale=0.25]
        \draw (1,-1.5) -- (1,1.5);
        \draw (2,-1.5) arc (180:0:0.5);
        \draw (2,1.5) arc (-180:0:0.5);
        \draw (4,-1.5) -- (4,1.5);
\end{tikzpicture}\;, \begin{tikzpicture}[baseline=-2,scale=0.25]
        \draw (1,1.5) arc (-180:0:0.5);
        \draw (1,-1.5) -- (3,1.5);
        \draw (2,-1.5) arc (180:0:0.5);
        \draw (4,-1.5) -- (4,1.5);
\end{tikzpicture}\;, \begin{tikzpicture}[baseline=-2,scale=0.25]
        \draw (1,-1.5) arc (180:0:0.5);
        \draw (1,1.5) arc (-180:0:0.5);
        \draw (3,-1.5) -- (3,1.5);
        \draw (4,-1.5) -- (4,1.5);
\end{tikzpicture}\;, \begin{tikzpicture}[baseline=-2,scale=0.25]
        \draw (1,-1.5) arc (180:0:0.5);
        \draw (3,-1.5) -- (1,1.5);
        \draw (2,1.5) arc (-180:0:0.5);
        \draw (4,-1.5) -- (4,1.5);
\end{tikzpicture}\;$$
because when the boundary is labelled with $2323P$ along both top and bottom, any other diagram connects unequal boundary labels.
Using the formulas from \cite{morrison} (easily derivable from the earlier work of \cite{MR1446615}), we then have
\begin{align*}
f^{(4)} & =  - \frac{[2]^2}{[4]}  + \frac{[2]}{[4]}  - \frac{[3]}{[4]}  + \frac{[2]}{[4]}  + \cdots
\end{align*}
and
\begin{align*}
(f^{(4)})_{2323P,2323P} & = 1 - \frac{[2]^2}{[4]} \frac{\dim(2)}{\dim(3)} + \frac{[2]}{[4]} - \frac{[3]}{[4]} \frac{\dim(3)}{\dim(2)} + \frac{[2]}{[4]} \\
	& = \frac{q^{2} - 1 + q^{-2}}{q^{2} +2 + q^{-2}}. \qedhere
\end{align*}
\end{proof}

\begin{lem}
\label{lem:0123P}
$$S_{0123P,0123P} = \kappa = \text{$1$ or $-r$}$$
\end{lem}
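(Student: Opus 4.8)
The plan is to read the quadratic identity \eqref{eq:S2} inside the single block of the matrix-algebra decomposition of $GPA(\Gamma)_{4,+}$ that contains this coefficient, after noting that this block is one-dimensional, so that \eqref{eq:S2} collapses to a scalar quadratic for $S_{0123P,0123P}$.

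First I would record a combinatorial fact about $\Gamma$. The loop underlying $S_{0123P,0123P}$ has its base point at the depth-$0$ vertex $0$ and its mid point at the depth-$4$ vertex $P$, so it lies in the summand $\cM_{0,P}$, whose matrix units are indexed by pairs of length-$4$ paths on $\Gamma$ from $0$ to $P$. Any length-$4$ path from a vertex of depth $0$ to a vertex of depth $4$ must raise the depth by one at each of its four steps, hence is forced to be $0\to 1\to 2\to 3\to P$. Therefore $\cM_{0,P}\iso\bbC$, and multiplication in it is ordinary multiplication of scalars (consistently with the fact that $(f^{(4)})_{0123P,0123P}=1$ from \eqref{eq:f0123P} is an idempotent value in this block); in particular $(S^2)_{0123P,0123P}=\bigl(S_{0123P,0123P}\bigr)^2$.

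Next I would restrict \eqref{eq:S2} to $\cM_{0,P}$. This is a planar-algebra identity, so after composing with the embedding $\varepsilon$ it persists in $GPA(\Gamma)_{4,+}$; writing $x:=S_{0123P,0123P}$ and using $(f^{(4)})_{0123P,0123P}=1$, it becomes $x^2=(1-r)x+r$, which factors as $(x-1)(x+r)=0$. Hence $S_{0123P,0123P}\in\{1,-r\}$, which we call $\kappa$; the same argument applied to $\cM_{0,Q}$ using \eqref{eq:f0123Q} shows $S_{0123Q,0123Q}$ likewise lies in $\{1,-r\}$. The only thing one must see here is that $\cM_{0,P}$ is a single matrix entry, after which the dichotomy is immediate --- so there is really no obstacle --- but it is worth flagging that the sign is genuinely undetermined at this stage, reflecting the freedom in labelling the two depth-$4$ vertices, and both possibilities will need to be carried through the subsequent analysis.
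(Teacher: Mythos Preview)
Your proof is correct and follows exactly the approach the paper takes: restrict \eqref{eq:S2} to the one-dimensional block $\cM_{0,P}$, use \eqref{eq:f0123P} for the Jones--Wenzl coefficient, and solve the resulting scalar quadratic $x^2=(1-r)x+r$. The paper states this in a single sentence; you have simply filled in the details (uniqueness of the path $0\to1\to2\to3\to P$, the factorisation $(x-1)(x+r)=0$).
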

\begin{proof}
Just solve Equation \eqref{eq:S2} in $\cM_{0,P}$, where we only have the $0123P,0123P$ component. Equation \eqref{eq:f0123P} tells us the coefficient of the Jones-Wenzl idempotent.
\end{proof}

When $r=1$ Equation \eqref{eq:S2} has a symmetry $S \mapsto -S$, so we may assume $\kappa = 1$ there.

It is now necessary to break into cases, to handle the two possible rotational eigenvalues.

%
%

\section{When the rotational eigenvalue is $\omega=-1$ and $r=1$.}
\label{sec:omega=-1}

We now have a large number of quadratic equations, coming from Equation \eqref{eq:S2} in 24 variables, along with further linear equations which we know can cut those 24 dimensions down to 4, and throughout we are working over $\mathbb{Q}(\sqrt{\mathrm{FPdims}})$. (This field is rather awkward; by using the lopsided conventions described in \cite{1205.2742}, we could arrange to work over $\mathbb{Q}(q)$, but this turns out to give little advantage.)

We begin by asking if we are lucky: is it possible to write any of these quadratics as a function of a single variable, using the linear equations?

It turns out that many of the components of Equation \eqref{eq:S2} can be rewritten modulo Equations \eqref{eq:lws-1} through \eqref{eq:lws-3} as quadratics in a single variable, and in particular we can find univariate equations in any of $S_{2323P,23P3P}$, $S_{2323P,23Q3P}$, $S_{2323Q,23P3Q}$, $S_{2323Q,23Q3Q}$, $S_{23P32,23P32}$, $S_{23P32,23Q32}$ or $S_{23Q32,23Q32}$. These variables only span a 2 dimensional subspace modulo Equation \eqref{eq:lws-3}, so we choose two components to analyze. Choosing components corresponding to collapsed loops reduces the amount of work required to rewrite in terms of a single variable.

\begin{lem}
\label{lem:stage11}
The components $23232,23P32$, $2323P,2323P$ of Equation \eqref{eq:S2} can be simplified using Equations \eqref{eq:lws-0} through \eqref{eq:lws-3} and Lemma \ref{lem:jw} to give
\begin{align}
S_{23P32,23P32} & = -\frac{q^4}{1 + q^2 + 2q^4 + q^6 + q^8} \notag \\
S_{2323P,23P3P}^2 & = -\frac{\left(q^{11}+q\right)^2}{2 \left(q^4+1\right) \left(q^4+q^2+1\right)^2 \left(q^{12}+q^8+q^6+q^4+1\right)}. \label{eq:stage11b}
\end{align}
\end{lem}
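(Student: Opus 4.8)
The plan is to read off the two identities as two judiciously chosen matrix entries of Equation~\eqref{eq:S2}. Since $\omega=-1$ we have $r=1$ (the $\omega=-1$ case of the second Lemma), so \eqref{eq:S2} becomes simply $S^2=f^{(4)}$, and the vertex dimensions simplify as recorded there. I would then examine the $(23232,23P32)$ entry, which lives in the block $\cM_{2,2}$, and the $(2323P,2323P)$ entry, which lives in $\cM_{2,P}$. In each block the product $(S^2)_{\gamma,\gamma'}$ is a finite sum $\sum_{\delta}S_{\gamma,\delta}S_{\delta,\gamma'}$ over length-four paths $\delta$ with the same endpoints as $\gamma$ and $\gamma'$ (with the graph planar algebra's dimension normalization, consistent with the $\sqrt{\dim}$ factors already visible in Lemma~\ref{lem:jw}). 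For the first entry the intermediate paths $\delta\colon 2\to 2$ are $21012$, $21212$, $21232$, $23212$, $23232$, $23P32$, $23Q32$; for the second the intermediate paths $\delta\colon 2\to P$ are $2123P$, $2323P$, $23P3P$, $23Q3P$, $23PP'P$. The right-hand sides are the single Jones--Wenzl entries $(f^{(4)})_{23232,23P32}$ from \eqref{eq:f23232,23P32} and $(f^{(4)})_{2323P,2323P}$ from \eqref{eq:f2323P}.

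The substantive work is the reduction of every coefficient $S_{\gamma,\delta}$, $S_{\delta,\gamma'}$ appearing to a coefficient of a ``collapsed'' loop supported on $\{2,3,P,Q\}$. Each intermediate path that strays out the initial arm is handled by running \eqref{eq:lws-1} and \eqref{eq:lws-2} in the direction that trades a backtrack at depth $1$ or $2$ for one nearer to the vertex $3$ (and \eqref{eq:lws-0} kills anything that produces the pattern $010$ along the way); the lone path through $P'$ is handled likewise by \eqref{eq:lws-P}. Once everything is expressed via collapsed loops, I would use \eqref{eq:lws-3} together with the rotational identifications for the eigenvalue $\omega=-1$ to land inside the two-dimensional space spanned by $S_{23P32,23P32}$ and $S_{2323P,23P3P}$, exactly as described just before the lemma. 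The reason for picking these two entries -- both indexed by collapsed loops, so the reduction is short -- is that after the reduction they \emph{decouple}: the $(23232,23P32)$ entry becomes an equation involving $S_{23P32,23P32}$ alone, which fixes its value, and the $(2323P,2323P)$ entry becomes an equation of the pure-square shape \eqref{eq:stage11b} for $S_{2323P,23P3P}$ -- the square appearing because in $(S^2)_{2323P,2323P}$ the cross term $S_{2323P,23P3P}\,S_{23P3P,2323P}$ becomes $-S_{2323P,23P3P}^2$ after the $\omega=-1$ rotational identification. It then remains to substitute, clear denominators, and simplify the rational-function identities in $q$; working in the lopsided conventions of \cite{1205.2742} one may do this over $\mathbb{Q}(q)$, and the square roots introduced by the $\kappa=\tfrac12$ exponents and by Lemma~\ref{lem:jw} should cancel, producing the two closed forms.

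The obstacle is entirely organisational, not conceptual. One must enumerate the intermediate paths and the collapsed loops they reduce to without slipping, keep exact track of the exponents $\kappa\in\{1,\tfrac12\}$ (hence of the accompanying signs and square roots of dimension ratios), keep track of the signs coming from the rotational eigenvector with $\omega=-1$, and then push through a genuinely nontrivial algebraic simplification. The one step that resists mechanisation -- and that the author explicitly calls an ``art'' -- is foreseeing which entries collapse to univariate equations: running a Gr\"obner computation blindly on all the components of $S^2=f^{(4)}$ would not single out the entries $(23232,23P32)$ and $(2323P,2323P)$ or reveal the convenient changes of variable. A built-in sanity check is that the two answers come out as honest elements of $\mathbb{Q}(q)$ with all square roots gone, and that the right-hand side of \eqref{eq:stage11b} is visibly a product of the expected low-degree factors.
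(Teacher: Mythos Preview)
Your plan is essentially the paper's own: work in $\cM_{2,2}$ for the first identity and in $\cM_{2,P}$ for the second, enumerate the intermediate paths exactly as you do, kill everything that strays out the arm with \eqref{eq:lws-0}--\eqref{eq:lws-2}, trade the lone $P'$-path for a $P3P$-path with \eqref{eq:lws-P}, and then eliminate $Q$ with \eqref{eq:lws-3}.

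There is, however, one missing ingredient. You claim that after applying \eqref{eq:lws-3} and the $\omega=-1$ rotational identifications every surviving coefficient lands in the two-dimensional span of $S_{23P32,23P32}$ and $S_{2323P,23P3P}$. That is not quite what the passage ``just before the lemma'' says: the seven listed coefficients span a two-dimensional subspace, but the full lowest-weight eigenspace is four-dimensional. The coefficients $S_{23232,23P32}$, $S_{23232,23Q32}$, $S_{2323P,2323P}$, $S_{2123P,2323P}$ that occur in your two sums are loops meeting $P\cup Q$ exactly \emph{once}; under rotation and \eqref{eq:lws-1}--\eqref{eq:lws-2} these collapse to scalar multiples of $S_{0123P,0123P}$, which lives in a \emph{third} direction. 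What makes the $(23232,23P32)$ equation linear in $S_{23P32,23P32}$ --- and hence ``fix its value'' --- is Lemma~\ref{lem:0123P}, which pins $S_{0123P,0123P}=1$ in the $\omega=-1$ case. The paper's proof invokes this explicitly; without it both of your equations still carry an undetermined overall factor and neither collapses to a univariate equation. Once you add that one sentence, your argument is the paper's.

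A small side remark: the two loops underlying $S_{2323P,23P3P}$ and $S_{23P3P,2323P}$ differ by a two-click rotation, whose eigenvalue is $\omega^2=+1$ (equivalently, self-adjointness of $S$ already gives the plus sign). The minus sign on the right of \eqref{eq:stage11b} therefore comes from the balance of the remaining terms after substituting the now-known constants, not from the rotational identification you cite.
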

\begin{proof}
The matrix algebra $\cM_{2,2}$ is 7-by-7 with rows and columns indexed by the paths $21012, 21212, 21232, 23212, 23232, 23P32$ and $23Q32$. Thus
$$S^2_{23232,23P32} = \sum_\gamma S_{23232,\gamma} S_{\gamma,23P32}$$
and we easily see that only the $\gamma = 23P32$ and $\gamma = 23Q32$ terms contribute to this sum, since otherwise $S_{23232,\gamma}$ is confined to the initial arm and so by Equations \eqref{eq:lws-0}, \eqref{eq:lws-1} and \eqref{eq:lws-2} zero.

Thus we have the equation 
\begin{align*}
S_{23232,23P32} S_{23P32,23P32} + S_{23232,23Q32} S_{23Q32,23P32} & = (f^{(4)})_{23232,23P32} \\
    & = -\sqrt{\frac {\dim (P)} {\dim (2)}}\frac {q^8} {(1 + q^4)^2 (1 + q^2 +       q^4)^2} 
\end{align*}
where we've used Equation \eqref{eq:f23232,23P32}. It's now easy to see the strategy. First use Equation \eqref{eq:lws-3} to replace $S_{23232,23Q32}$ and $S_{23Q32,23P32}$ with a linear combination of coefficients which avoid $Q$. Then  modulo rotations and collapsing, we can write $S_{23232,23P32}$ (indeed, any collapsed paths that enter $P$ or $Q$ exactly once) as a known multiple of $S_{0123P,0123P}$, which we know is equal to $1$ from Lemma \ref{lem:0123P}. After these steps, the left hand side is linear in $S_{23P32,23P32}$, and collecting terms gives the desired result.
\end{proof}

The proofs of the remaining lemmas in the paper are all analogous to the above, and we omit them for brevity.

\begin{lem}
The component $2323P,23P3P$ of  Equation \eqref{eq:S2} can be simplified using Equations \eqref{eq:lws-0} through \eqref{eq:lws-3}, Lemma \ref{lem:jw} and either solution of the equations in Lemma \ref{lem:stage11} to give
\begin{align*}
S_{23P3P,23P3P} & = \frac{q^{18}+3 q^{14}-2 q^{12}+3 q^{10}-2 q^8+3 q^6+q^2}{2 \left(q^8+q^6+q^4+q^2+1\right) \left(q^{12}+2 q^8+2 q^4+1\right)}
\end{align*}
\end{lem}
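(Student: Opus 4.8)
The plan is to imitate the proof of Lemma~\ref{lem:stage11}, now working with the $2323P,23P3P$ component of Equation~\eqref{eq:S2}. Since $r=1$ throughout this section that component is the scalar identity $\sum_{\gamma} S_{2323P,\gamma}\, S_{\gamma,23P3P} = (f^{(4)})_{2323P,23P3P}$, whose right-hand side is Equation~\eqref{eq:f2323P,23P3P} with $r$ set to $1$, and where $\gamma$ runs over the length-four paths on $\Gamma$ from the depth-$2$ vertex $2$ to the depth-$4$ vertex $P$. There are exactly five of these, namely $2123P$, $2323P$, $23P3P$, $23Q3P$ and $23PP'P$, because a length-four path from depth $2$ to depth $4$ reaches depth at most $5$; so $\cM_{2,P}$ is $5\times 5$ and the sum has five terms.

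Next I would reduce each of the ten coefficients appearing in these five products to quantities already known. First, use Equation~\eqref{eq:lws-P} to collapse the loops through $P'$ coming from $\gamma=23PP'P$ onto loops with $PP'P$ replaced by $P3P$; this turns $S_{2323P,23PP'P}$ into a dimension multiple of $S_{2323P,23P3P}$ and $S_{23PP'P,23P3P}$ into a dimension multiple of $S_{23P3P,23P3P}$. Next, use Equation~\eqref{eq:lws-3} (with $r=1$) to rewrite the loops through $Q$ coming from $\gamma=23Q3P$ as linear combinations of loops with $3Q3$ replaced by $323$ or $3P3$. Finally, exactly as in Lemma~\ref{lem:stage11}, any remaining coefficient of a collapsed loop that enters $P$ exactly once is, after applying Equations~\eqref{eq:lws-0}--\eqref{eq:lws-2} together with rotations (each contributing a sign, since $\omega=-1$), a known multiple of $S_{0123P,0123P}=1$ (Lemma~\ref{lem:0123P}). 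After these substitutions the only coefficients of loops entering $P$ more than once that survive are $S_{2323P,23P3P}$, whose square is Equation~\eqref{eq:stage11b}, and $S_{23P3P,23P3P}$ itself; and since $23P3P$ occurs only once among the five values of $\gamma$ (with the reductions of $S_{23Q3P,23P3P}$ and of $S_{23PP'P,23P3P}$ also producing $S_{23P3P,23P3P}$ linearly), the resulting equation is linear in $S_{23P3P,23P3P}$.

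Collecting terms, the identity takes the form $\lambda\,S_{23P3P,23P3P}+\mu=(f^{(4)})_{2323P,23P3P}$, where $\lambda$ and $\mu$ are assembled from the dimension formulas at $r=1$, from $S_{2323P,23P3P}$, and from the constants relating the loops that enter $P$ once to $S_{0123P,0123P}$; solving for $S_{23P3P,23P3P}$ and simplifying gives the stated rational function. In that final simplification the square roots of dimensions introduced by Equations~\eqref{eq:lws-P}, \eqref{eq:lws-3} and \eqref{eq:f2323P,23P3P} cancel, and so do the terms linear in $S_{2323P,23P3P}$, leaving only the dependence on $S_{2323P,23P3P}^2$ --- which is why either solution of Lemma~\ref{lem:stage11} may be used. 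I expect the main obstacle to be bookkeeping rather than anything conceptual: keeping track of every exponent $\kappa\in\{1,\tfrac12\}$ and every rotational sign along the chain of reductions, and then pushing through the somewhat lengthy rational-function simplification at the end --- precisely the kind of computation the paper observes is best carried out with machine assistance. Useful checks on the signs are that $\lambda$ must be non-zero and that all dependence on $S_{2323P,23P3P}$ (other than through its square) must cancel.
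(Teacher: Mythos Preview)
Your proposal is correct and follows exactly the approach the paper indicates: the paper omits this proof, saying only that it is ``analogous to'' the proof of Lemma~\ref{lem:stage11}, and you have carried out precisely that analogy---working in $\cM_{2,P}$, correctly enumerating the five length-four paths from $2$ to $P$, expanding the $2323P,23P3P$ component of $S^2$, and reducing via Equations~\eqref{eq:lws-0}--\eqref{eq:lws-3}, \eqref{eq:lws-P}, rotations, Lemma~\ref{lem:jw}, Lemma~\ref{lem:0123P}, and Lemma~\ref{lem:stage11} until only $S_{23P3P,23P3P}$ remains unknown. One small caution: your heuristic that ``the terms linear in $S_{2323P,23P3P}$ cancel'' is better read as a property of the \emph{final} simplified expression for $S_{23P3P,23P3P}$ rather than of the intermediate equation---the coefficient of $S_{23P3P,23P3P}$ before solving does pick up a contribution proportional to $S_{2323P,23P3P}$, but after dividing through and substituting the known constants the sign dependence disappears, as it must.
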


\begin{lem}
The $P323Q,P323Q$ component of Equation \eqref{eq:S2} can be simplified (using everything above, with either solution of Equation \eqref{eq:stage11b}) to give
\begin{equation}
\label{eq:q-identity1}
-\frac{\left(q^8{-}q^6{-}q^4{-}q^2{+}1\right) \left(q^8{-}q^6{+}q^4{-}q^2{+}1\right) \left(q^8{+}q^6{+}3 q^4{+}q^2{+}1\right)}{\left(q^2{-}q{+}1\right) \left(q^2{+}q{+}1\right) \left(q^4{+}1\right) \left(q^4{-}q^2{+}1\right)^2 \left(q^4{-}q^3{+}q^2{-}q{+}1\right) \left(q^4{+}q^3{+}q^2{+}q{+}1\right)} = 0
\end{equation}
\end{lem}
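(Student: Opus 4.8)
The plan is to mimic the proof of Lemma~\ref{lem:stage11}, now working in the matrix algebra $\cM_{P,Q}$. Since $\omega=-1$ forces $r=1$, Equation~\eqref{eq:S2} reads $S^2=f^{(4)}$, and we extract its $(P323Q,P323Q)$ entry. The rows and columns of $\cM_{P,Q}$ are indexed by the length-$4$ paths from $P$ to $Q$ on $\Gamma$; enumerating these, the only possibilities are
$$P323Q,\quad P3P3Q,\quad P3Q3Q,\quad P3QQ'Q,\quad PP'P3Q.$$
(It is precisely here that the hypothesis on depth $6$ is used: the only other candidate length-$4$ path from $P$ to $Q$ would have the form $P,P',w,Q',Q$ with $w$ a depth-$6$ vertex adjacent to both $P'$ and $Q'$, which we have excluded.) Hence
$$(f^{(4)})_{P323Q,P323Q}=(S^2)_{P323Q,P323Q}=\sum_{\gamma}S_{P323Q,\gamma}\,S_{\gamma,P323Q},$$
a sum of five terms, whose left-hand side is given by Equation~\eqref{eq:fP323Q}.

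Next I would reduce each coefficient $S_{P323Q,\gamma}$, and each companion coefficient $S_{\gamma,P323Q}$, to the quantities already determined. First use the collapsing relations~\eqref{eq:lws-P} and~\eqref{eq:lws-Q} to eliminate $P'$ and $Q'$, rewriting $PP'P3Q$ and $P3QQ'Q$ in terms of loops supported on $\{2,3,P,Q\}$. Then repeatedly apply Equation~\eqref{eq:lws-3} to trade every excursion through $Q$ for a combination of excursions through $2$ and through $P$. Finally, use rotation (with eigenvalue $\omega=-1$) together with~\eqref{eq:lws-1} and~\eqref{eq:lws-2} to collapse loops that enter the region $\{P,Q\}$ exactly once down to a $\mathbb{Q}(\sqrt{\mathrm{FPdims}})$-multiple of $S_{0123P,0123P}=1$ (Lemma~\ref{lem:0123P}), and loops that enter it twice down to multiples of $S_{23P32,23P32}$, $S_{2323P,23P3P}$ (Lemma~\ref{lem:stage11}) and $S_{23P3P,23P3P}$ (the preceding lemma); any loop confined to the initial arm vanishes by~\eqref{eq:lws-0}. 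The resulting linear combination involves $S_{2323P,23P3P}$ only through its square, so the sign ambiguity in Equation~\eqref{eq:stage11b} (and in the preceding lemma) is immaterial, exactly as asserted in the statement.

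Substituting the known values of these coefficients and clearing denominators over $\mathbb{Q}(\sqrt{\mathrm{FPdims}})$, the equation $(f^{(4)})_{P323Q,P323Q}-\sum_\gamma S_{P323Q,\gamma}S_{\gamma,P323Q}=0$ simplifies to Equation~\eqref{eq:q-identity1}. Since the denominator displayed there is a product of visibly nonzero polynomials in $q$, the content of the identity is the vanishing of the numerator.

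The calculation is, in principle, nothing more than linear algebra over $\mathbb{Q}(\sqrt{\mathrm{FPdims}})$ followed by a single rational simplification, and is best carried out by computer. The one genuine source of difficulty is bookkeeping: each application of a collapsing relation or of a rotation introduces normalization factors $(\dim\beta/\dim\alpha)^{\pm\kappa}$ and square roots $\sqrt{\dim\alpha}$ coming from the graph planar algebra inner product, with $\kappa\in\{1,\tfrac12\}$ according to whether the relevant vertex sits at the base point or mid point of the loop, and it is easy to accumulate sign or normalization errors. One must also keep careful track of which companion coefficients $S_{\gamma,P323Q}$ coincide (up to such factors) with $S_{P323Q,\gamma}$, which is where the self-adjointness of $S$ and the reality of all quantities after adjoining $\sqrt{\mathrm{FPdims}}$ enter.
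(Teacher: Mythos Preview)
Your proposal is correct and follows exactly the approach the paper intends: the paper explicitly states that all remaining lemmas are proved analogously to Lemma~\ref{lem:stage11} and omits the details, and your write-up fills in precisely that analogy for $\cM_{P,Q}$. Your enumeration of the five length-$4$ paths from $P$ to $Q$, the identification of where the depth-$6$ hypothesis enters, and the reduction strategy via Equations~\eqref{eq:lws-0}--\eqref{eq:lws-3} together with rotation and the previously determined coefficients are all on target.
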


\begin{lem}
All of the factors in Equation \eqref{eq:q-identity1} are strictly positive for $q>1$, except for $1 - q^2 - q^4 - q^6 + q^8$, which has 4 real roots at $q = \pm q_0^{\pm1}$ where $q_0$ is the largest real root, approximately $1.31228...$. The corresponding index value $q^2 + 2 + q^{-2}$ is $\frac{5+\sqrt{13}}{2}$, that is, the index of the Haagerup subfactor.
\end{lem}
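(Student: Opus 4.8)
The plan is to treat each factor in the numerator and denominator of Equation \eqref{eq:q-identity1} separately, showing all but one are strictly positive for $q > 1$, and then analyze the exceptional factor. For the positive factors, the strategy is to rewrite each polynomial in $q$ as a manifestly positive combination of terms, typically by grouping into palindromic pairs. For instance, $q^8 - q^6 + q^4 - q^2 + 1$ is a cyclotomic-type polynomial; one can substitute $x = q + q^{-1} \geq 2$ (since $q > 1$) after dividing by $q^4$, obtaining a polynomial in $x$ that is easily checked to be positive on $[2,\infty)$. The same device handles $q^8 + q^6 + 3q^4 + q^2 + 1$, $q^4 + 1$, $q^4 - q^2 + 1$, $q^2 \pm q + 1$, and $q^4 \pm q^3 + q^2 \pm q + 1$: each is a product or sum of obviously positive quantities once written in terms of $x$, or more simply each has all coefficients positive after the substitution, or is a known cyclotomic polynomial $\Phi_n(q)$ with $n$ such that it has no real roots. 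In fact $q^2-q+1 = \Phi_6$, $q^2+q+1 = \Phi_3$, $q^4+1 = \Phi_8$, $q^4-q^2+1 = \Phi_{12}$, $q^4+q^3+q^2+q+1 = \Phi_5$, and $q^4-q^3+q^2-q+1 = \Phi_{10}$, none of which vanish for real $q$; and $q^8-q^6+q^4-q^2+1 = \Phi_{20}$ is likewise real-rootless.

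It then remains to analyze $p(q) = q^8 - q^6 - q^4 - q^2 + 1$. Dividing by $q^4$ and setting $x = q^2 + q^{-2}$ gives $q^{-4} p(q) = x^2 - x - 3$ (using $q^4 + q^{-4} = x^2 - 2$), whose roots are $x = (1 \pm \sqrt{13})/2$. Only $x_0 = (1+\sqrt{13})/2 > 2$ corresponds to real $q$, and this yields $q^2 + q^{-2} = (1+\sqrt{13})/2$, hence $q^2 + 2 + q^{-2} = (5+\sqrt{13})/2$, which is the index of the Haagerup subfactor. Solving $q^2 + q^{-2} = x_0$ for $q > 1$ gives the largest real root $q_0 \approx 1.31228$, and the full real root set of $p$ is $\{\pm q_0, \pm q_0^{-1}\}$ since $p$ is even and palindromic.

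The main obstacle, such as it is, is purely bookkeeping: one must be careful that the substitution $x = q^2 + q^{-2}$ (versus $y = q + q^{-1}$) is applied to the correct factors, since some factors like $q^4 + q^3 + q^2 + q + 1$ are not even in $q$ and require the $y$-substitution instead, after which one checks positivity of a quadratic in $y \geq 2$. Since every such auxiliary quadratic has positive leading coefficient and positive value at $y = 2$ (and negative discriminant or roots below $2$, as the case may be), positivity on $[2,\infty)$ follows, and no factor other than $p(q)$ can contribute a real root. Numerical evaluation of $q_0$ to the stated precision is immediate.
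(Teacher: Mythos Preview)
Your argument is correct. The identification of the denominator factors and $q^8-q^6+q^4-q^2+1$ as cyclotomic polynomials $\Phi_3,\Phi_5,\Phi_6,\Phi_8,\Phi_{10},\Phi_{12},\Phi_{20}$ cleanly disposes of those (no real roots for $\Phi_n$ with $n>2$), the remaining numerator factor $q^8+q^6+3q^4+q^2+1$ has all positive coefficients, and your substitution $x=q^2+q^{-2}$ reducing $q^{-4}p(q)$ to $x^2-x-3$ is accurate and yields exactly the Haagerup index $(5+\sqrt{13})/2$ and the four real roots $\pm q_0^{\pm 1}$.

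The paper itself offers no proof of this lemma; it is stated as self-evident, with the surrounding computational lemmas having their proofs omitted ``for brevity.'' Your write-up therefore supplies strictly more than the paper does. The cyclotomic identification is a nicer organizing principle than the generic palindromic-substitution approach you also sketch, since it makes the absence of real roots immediate without any case-by-case checking of discriminants or values at $2$; you could safely drop the redundant discussion of the $y=q+q^{-1}$ substitution for $\Phi_5$ and $\Phi_{10}$, as the cyclotomic observation already covers them.
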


Thus, by the classification results of \cite{MR1317352} 
the only possibility with $\omega=-1$ is that the standard invariant of our subfactor is exactly that of the Haagerup subfactor.

\section{When the rotational eigenvalue is $\omega=+1$.}
\label{sec:omega=+1}
\begin{lem}
We cannot have $\kappa = -r$ with $\omega=1$ in Lemma \ref{lem:0123P}, since otherwise the $0123Q,0123Q$ component of Equation \eqref{eq:S2} gives 
$$\frac{8 q^4 \left(q^8+q^6+q^4+q^2+1\right)^2 \left(q^8+q^6+2 q^4+q^2+1\right)}{\left(q^2+1\right)^8 \left(q^4-q^2+1\right)^4} = 0,$$ which is only possible if $q$ is a root of unity.
\end{lem}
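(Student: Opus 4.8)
The plan is to mimic the computational strategy already established in Lemma~\ref{lem:stage11} and the subsequent lemmas of \S\ref{sec:omega=-1}, but now applied in the matrix algebra $\cM_{0,Q}$ rather than in $\cM_{2,2}$ or $\cM_{P,Q}$. Recall that by Lemma~\ref{lem:0123P} the $0123P,0123P$ component of Equation~\eqref{eq:S2} in $\cM_{0,P}$ forces $S_{0123P,0123P} = \kappa \in \{1, -r\}$; the two cases were not distinguished there because in $\cM_{0,P}$ there is only the single matrix entry to work with. The key observation is that $\cM_{0,Q}$, although it sits over the same chain of depths, carries an \emph{independent} diagonal entry $S_{0123Q,0123Q}$, and the constraint linking it back to $S_{0123P,0123P}$ comes precisely from the lowest weight Equation~\eqref{eq:lws-3} (the only place where $P$ and $Q$ coefficients mix). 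So first I would write out the $0123Q,0123Q$ component of $S^2 = (1-r)S + r f^{(4)}$.

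The second step is to expand the matrix product $S^2_{0123Q,0123Q} = \sum_\gamma S_{0123Q,\gamma}\, S_{\gamma,0123Q}$ and observe, exactly as in the proof of Lemma~\ref{lem:stage11}, that almost every term vanishes: any $\gamma$ that routes through the initial arm $0,1,2,3$ only gives coefficients killed by Equations~\eqref{eq:lws-0}, \eqref{eq:lws-1}, \eqref{eq:lws-2}, so the surviving contributions are indexed by loops that pass through $P$ or $Q$ at depth~4. Using Equations~\eqref{eq:lws-P}, \eqref{eq:lws-Q} and the collapsing procedure described after Lemma~\ref{lem:jw}, together with Equation~\eqref{eq:lws-3} to eliminate the $Q$-coefficients in favour of $P$-coefficients, each surviving $S_{0123Q,\gamma}$ becomes a known scalar multiple (a rational function of $q$, using the $\omega=+1$ dimension formulas of Lemma~2) times $S_{0123P,0123P} = -r$. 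Plugging in $r = (q^8+q^6+2q^4+q^2+1)/(q^8+q^6+q^2+1)$ and the coefficient $(f^{(4)})_{0123Q,0123Q} = 1$ from Equation~\eqref{eq:f0123Q}, the whole identity collapses to the single rational equation displayed in the statement.

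The third step is the elementary verification that
$$
\frac{8 q^4 \left(q^8+q^6+q^4+q^2+1\right)^2 \left(q^8+q^6+2 q^4+q^2+1\right)}{\left(q^2+1\right)^8 \left(q^4-q^2+1\right)^4} = 0
$$
has no admissible solution. The numerator is a product of $8q^4$ (nonzero for $q \neq 0$) with $\left(q^8+q^6+q^4+q^2+1\right)^2$ — which is $(q^{10}-1)^2/(q^2-1)^2$, vanishing only at primitive tenth roots of unity — and $q^8+q^6+2q^4+q^2+1 = (q^8+q^6+q^4+q^2+1) + q^4$, whose roots one checks all lie on the unit circle (equivalently, substitute $x = q^2 + q^{-2}$ to get a quadratic in $x$ with both roots in $(-2,2)$). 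Hence any $q$ satisfying the displayed equation is a root of unity, which is incompatible with $[M:N] = q^{-2}+2+q^2 > 4$ for a $3$-supertransitive subfactor of this type. Therefore $\kappa = -r$ is impossible when $\omega = +1$, and we may take $\kappa = 1$ in the remainder of \S\ref{sec:omega=+1}.

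I expect the main obstacle to be purely bookkeeping: correctly tracking the $\kappa = 1$ versus $\kappa = 1/2$ exponents on the dimension ratios as a loop is collapsed from depth~$5$ down to the vicinity of vertex~$3$, and making sure the repeated application of Equation~\eqref{eq:lws-3} (which introduces a $(1/r)^\kappa$ factor with $r \neq 1$ here, unlike the $\omega=-1$ case) is done consistently for every surviving loop $\gamma$. As the paper itself notes, this is the kind of calculation best carried out in dialogue with a computer algebra system; the only genuinely conceptual input is the choice of the $0123Q,0123Q$ component of $\cM_{0,Q}$, which is exactly the component that ``sees'' the $\kappa = -r$ branch of Lemma~\ref{lem:0123P} while remaining supported entirely within the first five depths of $\Gamma$.
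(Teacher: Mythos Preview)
Your overall strategy is exactly the paper's (the proof there is omitted as ``analogous'' to Lemma~\ref{lem:stage11}), and your third step is correct. However, your second step is described as more elaborate than the situation actually is, and this is worth correcting because it obscures how clean the argument really is.

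There is only \emph{one} length-$4$ path from $0$ to $Q$, namely $0123Q$, so $\cM_{0,Q}$ is a $1\times 1$ matrix algebra just like $\cM_{0,P}$. There is no genuine sum over $\gamma$: the matrix product is simply $(S_{0123Q,0123Q})^2$, and Equation~\eqref{eq:S2} reads $x^2-(1-r)x-r=0$ with $x=S_{0123Q,0123Q}$. Equations~\eqref{eq:lws-P}, \eqref{eq:lws-Q} and the collapsing procedure play no role here, since the loop $0123Q3210$ never visits $P'$ or $Q'$ and cannot be collapsed on the initial-arm side.

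The only further input is a \emph{single} application of the cap at vertex $3$ adjacent to the midpoint (i.e.\ one instance of Equation~\eqref{eq:lws-3} with $\kappa=1$): the $\cdots 323\cdots$ term lies on the initial arm and vanishes by Equation~\eqref{eq:lws-0}, leaving $S_{0123Q,0123Q}=-r\,S_{0123P,0123P}$. No rotation (and hence no $\omega$) is needed for this relation; the dependence on $\omega$ enters only through the value of $r$. Setting $S_{0123P,0123P}=-r$ gives $x=r^2$, and substituting into the quadratic yields
\[
r^4-(1-r)r^2-r \;=\; r(r+1)^2(r-1),
\]
which, upon inserting $r=\dfrac{q^8+q^6+2q^4+q^2+1}{q^8+q^6+q^2+1}$ and using $q^8+q^6+q^2+1=(q^2+1)^2(q^4-q^2+1)$, is precisely the displayed rational function.
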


\begin{lem}
\label{lem:stage21}
If $\kappa = 1$ in Lemma \ref{lem:0123P}, then the $23232,23P32$ and $2323P,2323P$ components of Equation \eqref{eq:S2} give
\begin{align*}
S_{23P32,23P32} & = 0 \\
\intertext{and}
 S_{2323P,23P3P} & = \frac{q \left(q^8+q^6+q^2+1\right)}{\sqrt{2} \sqrt{q^4+1} \left(q^4+q^2+1\right)^2} \\
 \intertext{or}
 & = \frac{q \sqrt{q^4+1}}{\sqrt{2} \left(q^4+q^2+1\right)}
\end{align*}
\end{lem}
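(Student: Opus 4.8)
The plan is to follow the template established in the proof of Lemma \ref{lem:stage11} essentially verbatim, since the only change between the $\omega=-1$ and $\omega=+1$ settings is the substitution of the appropriate values of $r$ and the vertex dimensions (here $r = \frac{q^8 + q^6 + 2q^4 + q^2 + 1}{q^8 + q^6 + q^2 + 1}$, and the dimension formulas in the second Lemma). Concretely, I would first write out the $23232,23P32$ component of Equation \eqref{eq:S2}: expanding $S^2_{23232,23P32} = \sum_\gamma S_{23232,\gamma}S_{\gamma,23P32}$ over the seven paths indexing $\cM_{2,2}$, only the $\gamma = 23P32$ and $\gamma = 23Q32$ terms survive, since every other $S_{23232,\gamma}$ lies on the initial arm and so vanishes by Equations \eqref{eq:lws-0}--\eqref{eq:lws-2}. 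This yields the scalar equation $S_{23232,23P32}S_{23P32,23P32} + S_{23232,23Q32}S_{23Q32,23P32} = (f^{(4)})_{23232,23P32}$.

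Next I would eliminate the $Q$-entries: apply Equation \eqref{eq:lws-3} to rewrite $S_{23232,23Q32}$ and $S_{23Q32,23P32}$ as linear combinations of coefficients that pass only through $P$ (not $Q$), then use Equations \eqref{eq:lws-1}, \eqref{eq:lws-2}, \eqref{eq:lws-P} together with rotation-invariance and collapsing to express $S_{23232,23P32}$ and the other single-$P$ coefficients as known scalar multiples of $S_{0123P,0123P}$, which equals $1$ by Lemma \ref{lem:0123P} and the remark that $\kappa=1$ in this branch. At this point the left-hand side is linear in $S_{23P32,23P32}$, and the right-hand side is the explicit value from Equation \eqref{eq:f23232,23P32}; solving gives $S_{23P32,23P32} = 0$. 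The $2323P,2323P$ component is then handled the same way: expand $S^2_{2323P,2323P}$, discard the arm-supported terms, substitute in $S_{23P32,23P32}=0$ and the collapsed-loop reductions of the remaining coefficients, use Equations \eqref{eq:f2323P} and \eqref{eq:f2323P,23P3P} for the Jones--Wenzl pieces, and obtain a univariate quadratic in $S_{2323P,23P3P}$. Its two roots are the two displayed values.

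The main obstacle is purely computational bookkeeping rather than conceptual: one must track the sign/dimension factors $\left(\dim(\beta)/\dim(\alpha)\right)^\kappa$ correctly across each collapse (with $\kappa=1$ or $\kappa=1/2$ depending on whether the loop hits the variable vertex at a base/mid point), and must correctly expand the quadratic $S^2$ in the matrix algebras $\cM_{2,2}$ and $\cM_{2,P}$ with the $\omega=+1$ dimension data, which makes the coefficients messier than in \S\ref{sec:omega=-1}. Since the paper states that the proofs of all remaining lemmas are analogous to that of Lemma \ref{lem:stage11} and are omitted for brevity, I would likewise simply record that this is a direct computation following that template, noting only the two inputs that change ($r$ and the vertex dimensions), and verify the final quadratic factors as $\left(\sqrt{2}\sqrt{q^4+1}(q^4+q^2+1)^2\, x - q(q^8+q^6+q^2+1)\right)\left(\sqrt{2}(q^4+q^2+1)\, x - q\sqrt{q^4+1}\right) = 0$ to present the two solutions cleanly.
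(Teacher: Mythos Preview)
Your proposal is correct and matches the paper's approach exactly: the paper omits this proof, stating that it is analogous to that of Lemma~\ref{lem:stage11}, and what you have written is precisely that analogous computation with the $\omega=+1$ data substituted. The only minor quibble is that for the $2323P,2323P$ component you only need Equation~\eqref{eq:f2323P} for the Jones--Wenzl entry, not \eqref{eq:f2323P,23P3P}; otherwise your outline is accurate.
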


\begin{lem}
The first case in Lemma \ref{lem:stage21} is impossible, because then the $2323Q,2323Q$ component of Equation \eqref{eq:S2} gives 
$$
\frac{8 q^6 \left(q^4-q^3+q^2-q+1\right)^2 \left(q^4+q^3+q^2+q+1\right)^2}{(q-1)^2 (q+1)^2 \left(q^2+1\right)^4 \left(q^2-q+1\right)^2 \left(q^2+q+1\right)^2 \left(q^4-q^2+1\right)^2} = 0
$$
which can not hold when $q > 1$.
\end{lem}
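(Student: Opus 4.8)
The plan is to follow the same template as Lemma~\ref{lem:stage11} and Lemma~\ref{lem:stage21}, now feeding in the data $\kappa=1$ with $S_{0123P,0123P}=1$ (Lemma~\ref{lem:0123P}), the first-case values $S_{23P32,23P32}=0$ and $S_{2323P,23P3P}=\frac{q\left(q^8+q^6+q^2+1\right)}{\sqrt{2}\sqrt{q^4+1}\left(q^4+q^2+1\right)^2}$ (Lemma~\ref{lem:stage21}), and the value of $r$ in the $\omega=+1$ case recorded above. The relevant block is $\cM_{2,Q}$, a $4$-by-$4$ matrix algebra whose rows and columns are indexed by the length-$4$ paths from $2$ to $Q$ on $\Gamma$, namely $2123Q$, $2323Q$, $23P3Q$ and $23Q3Q$. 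Thus
$$ S^2_{2323Q,2323Q} = \sum_{\gamma} S_{2323Q,\gamma}\, S_{\gamma,2323Q}, $$
the sum running over those four paths, and the $(2323Q,2323Q)$ component of Equation~\eqref{eq:S2} reads $S^2_{2323Q,2323Q} = (1-r)\, S_{2323Q,2323Q} + r\,(f^{(4)})_{2323Q,2323Q}$, with the last coefficient supplied by Equation~\eqref{eq:f2323Q}.

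First I would reduce every coefficient appearing here to the known data. Applying Equation~\eqref{eq:lws-3} removes all occurrences of the vertex $Q$, expressing each term as a combination of coefficients of loops supported on $\{2,3,P\}$ together with coefficients of loops confined to the initial arm; the latter vanish by Equations~\eqref{eq:lws-0}, \eqref{eq:lws-1} and \eqref{eq:lws-2}. Using Equations~\eqref{eq:lws-1}, \eqref{eq:lws-2} and \eqref{eq:lws-P} together with the rotational symmetry, each surviving coefficient collapses to a real multiple of one of the loops in the immediate vicinity of vertex $3$: those meeting $P$ exactly once become multiples of $S_{0123P,0123P}$, and those meeting $P$ twice become multiples of $S_{23P32,23P32}$ and $S_{2323P,23P3P}$. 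The same reduction applies to $S_{2323Q,2323Q}$ on the right-hand side. Substituting the values from Lemma~\ref{lem:0123P} and the first case of Lemma~\ref{lem:stage21}, together with $r=\frac{q^8+q^6+2q^4+q^2+1}{q^8+q^6+q^2+1}$, this component of Equation~\eqref{eq:S2} collapses to a single rational identity in $q$; carrying out the simplification (a computer-algebra computation) yields precisely
$$ \frac{8 q^6 \left(q^4-q^3+q^2-q+1\right)^2 \left(q^4+q^3+q^2+q+1\right)^2}{(q-1)^2 (q+1)^2 \left(q^2+1\right)^4 \left(q^2-q+1\right)^2 \left(q^2+q+1\right)^2 \left(q^4-q^2+1\right)^2} = 0. $$

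To finish, I would observe that the left-hand side is strictly positive for every real $q>1$. The denominator is a product of squares of nonzero reals, and the numerator is $8q^6$ times $\bigl((q^5-1)/(q-1)\bigr)^2\bigl((q^5+1)/(q+1)\bigr)^2$, both inner factors being positive for $q>0$. Hence the displayed equation has no solution, so the first case of Lemma~\ref{lem:stage21} cannot occur. The only genuinely delicate point is the bookkeeping: one must order the substitutions so that the square roots of Frobenius--Perron dimensions cancel and the expression collapses to the clean rational form above. As the introduction notes, the real content is in choosing this particular component — rather than any of the other components of Equation~\eqref{eq:S2} — after the reductions of Lemma~\ref{lem:stage21}, since it is this choice that makes the final simplification tractable; that selection is at present more an art than a science.
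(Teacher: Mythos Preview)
Your approach is correct and is exactly the template the paper intends: after Lemma~\ref{lem:stage11} the paper states that all remaining proofs are analogous and omits them, so your sketch is in fact more detailed than what the paper provides.

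One small bookkeeping slip: the block $\cM_{2,Q}$ is $5$-by-$5$, not $4$-by-$4$. You omitted the path $23QQ'Q$ (i.e.\ $2\to 3\to Q\to Q'\to Q$), which stays within the first five depths and so belongs to $\cA$. This does not affect the method, since by Equation~\eqref{eq:lws-Q} the coefficient $S_{2323Q,23QQ'Q}$ is a known multiple of $S_{2323Q,23Q3Q}$ and hence folds into the terms you already listed; but you should include \eqref{eq:lws-Q} alongside \eqref{eq:lws-P} in your reduction step.
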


\begin{lem}
In the second case of Lemma \ref{lem:stage21} the $2323P,23P3P$ component of Equation \eqref{eq:S2} gives 
\begin{equation*}
S_{23P3P,23P3P} = -\frac{q^2}{2 \left(q^4+q^2+1\right)}
\end{equation*}
\end{lem}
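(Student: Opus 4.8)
The plan is to follow exactly the computational recipe established in Lemma \ref{lem:stage11} and the lemmas immediately following it, now specialized to the $\omega=+1$, $r = (q^8+q^6+2q^4+q^2+1)/(q^8+q^6+q^2+1)$ branch, and applied to the single matrix-algebra component labelled by the loop $2323P,23P3P$ in $\cM_{P,P}$. First I would write out $S^2_{2323P,23P3P} = \sum_\gamma S_{2323P,\gamma}\, S_{\gamma,23P3P}$, where $\gamma$ ranges over the paths from $P$ to $P$ of the appropriate length; as in the proof of Lemma \ref{lem:stage11}, most terms drop out because the corresponding coefficients are confined to the initial arm and hence vanish by Equations \eqref{eq:lws-0}, \eqref{eq:lws-1}, \eqref{eq:lws-2}, and the only surviving $\gamma$'s are those that genuinely visit $P$ (and, via Equation \eqref{eq:lws-3}, also those visiting $Q$, which must then be re-expressed in terms of $P$-loops and loops avoiding both $P$ and $Q$).

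Next I would use the lowest-weight reductions \eqref{eq:lws-1}, \eqref{eq:lws-2}, \eqref{eq:lws-P}, \eqref{eq:lws-Q} together with the rotation action to collapse every surviving loop to one supported on $\{2,3,P,Q\}$, and then \eqref{eq:lws-3} to eliminate $Q$ entirely; at this point every coefficient on the right-hand side is a known rational-in-$q$ multiple of either $S_{0123P,0123P} = 1$ (Lemma \ref{lem:0123P}, $\kappa=1$ case, which we are in by the preceding lemmas), of $S_{23P32,23P32}$, which is $0$ in the surviving second case of Lemma \ref{lem:stage21}, or of $S_{2323P,23P3P}$, whose value $\tfrac{q\sqrt{q^4+1}}{\sqrt2\,(q^4+q^2+1)}$ is also given by the second case of Lemma \ref{lem:stage21}. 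The term $(f^{(4)})_{2323P,2323P}$ — wait, rather $(f^{(4)})_{2323P,23P3P}$ — is supplied by Equation \eqref{eq:f2323P,23P3P} of Lemma \ref{lem:jw}, specialized to the current value of $r$. After all substitutions the equation becomes linear in the single unknown $S_{23P3P,23P3P}$ (the quadratic part having been absorbed into already-determined quantities), and collecting terms and clearing denominators yields the stated closed form $-\tfrac{q^2}{2(q^4+q^2+1)}$.

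The main obstacle, as the paper itself warns, is not conceptual but bookkeeping: correctly enumerating the contributing paths $\gamma$ in the $7\times 7$ (or appropriately sized) matrix block $\cM_{P,P}$, tracking the exponents $\kappa \in \{1,\tfrac12\}$ for each collapse step depending on whether the loop passes through the relevant vertex at its base or mid point, and keeping the square roots of Frobenius--Perron dimensions consistent when passing between $S$-coefficients and $f^{(4)}$-coefficients. The square roots $\sqrt{\dim(2)},\sqrt{\dim(P)}$ appearing in \eqref{eq:f2323P,23P3P} and in the collapse factors must cancel against those introduced by the reductions of $S_{2323P,23P3P}$ and the off-diagonal $f^{(4)}$ terms, leaving a rational expression in $q$ alone; verifying this cancellation (and that the $\pm$ ambiguities in the square roots are pinned down by the choices already made in Lemmas \ref{lem:stage11} and \ref{lem:stage21}) is where care is needed. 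Once the algebra is fed to a computer algebra system with the correct value of $r$ substituted, simplification is routine, and the result follows. Since this is entirely parallel to the proof of Lemma \ref{lem:stage11}, we omit the details, as promised above.
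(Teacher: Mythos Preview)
Your proposal follows exactly the template of Lemma~\ref{lem:stage11}, which is precisely what the paper intends (it omits the proof and declares it analogous). The one slip is that the component $2323P,23P3P$ lives in $\cM_{2,P}$, not $\cM_{P,P}$: the base point of the loop is $2$ and the mid point is $P$, so the intermediate paths $\gamma$ in $S^2_{2323P,23P3P}=\sum_\gamma S_{2323P,\gamma}S_{\gamma,23P3P}$ run from $2$ to $P$ (namely $2123P$, $2323P$, $23P3P$, $23Q3P$, $23PP'P$), not from $P$ to $P$. This matters because $\cM_{P,P}$ is explicitly excluded from $\cA$, whereas $\cM_{2,P}$ is included; but once you correct the index set, your enumeration, reductions, and substitution of the second-case values from Lemma~\ref{lem:stage21} go through exactly as you describe.
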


\begin{lem}
Finally, in the second case of Lemma \ref{lem:stage21}, using the conclusion of the previous lemma we find that the $P323Q,P323Q$ component of Equation \eqref{eq:S2} gives
\begin{equation*}
\frac{-q^{20}+2 q^{10}-1}{\left(q^2+1\right)^4 \left(q^4-q^2+1\right)^3} = 0
\end{equation*}
which is impossible when $q > 1$.
\end{lem}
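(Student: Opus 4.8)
The plan is to imitate the proof of Lemma~\ref{lem:stage11}, now expanding Equation~\eqref{eq:S2} in the matrix block $\cM_{P,Q}$ of $GPA(\Gamma)_{4,+}$. The rows and columns of this block are indexed by the length-$4$ paths on $\Gamma$ from $P$ to $Q$, and because no depth-$6$ vertex is joined to both $P'$ and $Q'$ every such path stays within the first five depths; there are exactly five of them, namely $P323Q$, $P3P3Q$, $P3Q3Q$, $PP'P3Q$ and $P3QQ'Q$, so $\cM_{P,Q}$ is $5\times 5$. Writing out the $P323Q,P323Q$ entry of $S^2=(1-r)S+rf^{(4)}$ and inserting the Jones--Wenzl coefficient from Equation~\eqref{eq:fP323Q} gives
$$\sum_{\gamma} S_{P323Q,\gamma}\, S_{\gamma,P323Q} \;=\; (1-r)\, S_{P323Q,P323Q} \;+\; r\,\frac{1-q^2+q^4-q^6+q^8}{(1+q^4)^2},$$
with $\gamma$ ranging over the five paths above.

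Next I would rewrite every loop on the left-hand side in the standard ``collapsed'' coordinates supported on $\{2,3,P,Q\}$, exactly as in \S\ref{sec:omega=-1} and \cite[\S A]{1208.3637}: Equations~\eqref{eq:lws-P} and \eqref{eq:lws-Q} retract the $P'$ and $Q'$ legs onto $3$, Equations~\eqref{eq:lws-0}--\eqref{eq:lws-2} kill or collapse any excursion down the initial arm, and Equation~\eqref{eq:lws-3} rewrites each remaining collapsed loop through $Q$ in terms of loops through $P$ alone -- all while tracking the exponents $\kappa$ according to whether the collapsed vertex sits at the base point or the mid point. After these substitutions (and using the rotational eigenvalue $\omega=+1$ to identify loops in a common orbit) every surviving coefficient is one of the quantities already pinned down: $S_{0123P,0123P}=1$ (Lemma~\ref{lem:0123P}, the only possibility left for $\omega=+1$ by the first lemma of this section), $S_{23P32,23P32}=0$ and $S_{2323P,23P3P}=q\sqrt{q^4+1}/(\sqrt2\,(q^4+q^2+1))$ (second case of Lemma~\ref{lem:stage21}), and $S_{23P3P,23P3P}=-q^2/(2(q^4+q^2+1))$ (the preceding lemma). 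Substituting the value of $r$ recorded above for $\omega=+1$ and simplifying over $\mathbb{Q}(\sqrt{\mathrm{FPdims}})$, the identity collapses to
$$\frac{-q^{20}+2q^{10}-1}{(q^2+1)^4(q^4-q^2+1)^3}=0 .$$
Since $-q^{20}+2q^{10}-1=-(q^{10}-1)^2<0$ for every real $q>1$, no such $q$ exists, which is the desired contradiction.

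The only real difficulty here is bookkeeping. One must enumerate the contributing paths $\gamma$ correctly, apply the collapsing relations with the right $\kappa$-exponents, and -- the genuinely delicate step -- check that the square roots entering through $S_{2323P,23P3P}$ and through the off-diagonal Jones--Wenzl coefficients of Lemma~\ref{lem:jw} cancel, so that the final expression is the rational function stated. As with the earlier lemmas, this is best carried out with a computer: an unsupervised Gr\"obner-basis computation will not find the correct sequence of reductions, but once the order above is fixed each individual simplification is routine.
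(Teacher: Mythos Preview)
Your proposal is correct and follows exactly the approach the paper indicates: the paper states that the proofs of the remaining lemmas are all analogous to that of Lemma~\ref{lem:stage11} and omits them, so your expansion in $\cM_{P,Q}$, collapsing via Equations~\eqref{eq:lws-0}--\eqref{eq:lws-3}, and substitution of the previously determined values is precisely what is intended. Your enumeration of the five paths from $P$ to $Q$ and the observation that $-q^{20}+2q^{10}-1=-(q^{10}-1)^2$ are both correct.
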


Thus we see that the rotational eigenvalue $\omega=+1$ was impossible, and we have completed the proof of the theorem.

\newcommand{\urlprefix}{}

\bibliographystyle{alpha}
\bibliography{../../bibliography/bibliography}

This paper is available online at 
\arxiv{1302.5148}, under a ``Creative Commons-By Attribution'' license.
It has been accepted for publication in the \emph{Bulletin of the London Mathematical Society} as of 2013/10/14.
MSC classes:	46L37 (Primary), 18D05, 57M20 (Secondary)

\end{document}